\newtheorem{theorem}{Theorem}[section]
\newtheorem{lemma}[theorem]{Lemma}
\newtheorem{proposition}[theorem]{Proposition}
\theoremstyle{definition}
\newtheorem{definition}[theorem]{Definition}
\theoremstyle{remark}
\numberwithin{equation}{section}
\begin{document}
	
	\setcounter{page}{1}
	
	\title[Representations of $g$-fusion frames in Hilbert $C^{\ast}$-Modules]{Representations of $g$-fusion frames in Hilbert $C^{\ast}$-Modules}

	\author[A. Karara, R. Eljazzar, H. Sfouli]{Abdelilah Karara, Roumaissae Eljazzar$^{*}$ and  Hassan Sfouli}
	
	\address{Department of Mathematics, University of Ibn Tofail, Kenitra, Morocco}
	\email{\textcolor[rgb]{0.00,0.00,0.84}{abdelilah.karara@uit.ac.ma; roumaissae.eljazzar@uit.ac.ma; hassan.sfouli@uit.ac.ma}}

	\subjclass[2020]{41A58,  42C15, 46L05.}
	
	\keywords{Fusion frames, g-Fusion frames, Hilbert $C*$-modules.}
	
	\date{}

	\begin{abstract} In this paper, we provide some generalization of the concept of fusion frames following that evaluate their representability via a linear operator in Hilbert $C*$-module. We assume that $\Upsilon  _\xi$  is  self-adjoint and $\Upsilon  _\xi(\frak{N} _\xi)= \frak{N} _\xi$
		for all $\xi \in \mathfrak{S}$, and show that if a
		$g-$fusion frame $\{(\frak{N} _\xi, \Upsilon  _\xi)\}_{\xi \in \mathfrak{S}}$ is represented via a linear operator $\mathcal{T}$ on
		$\hbox{span} \{\frak{N} _\xi\}_{ \xi \in \mathfrak{S}}$, then $\mathcal{T}$ is bounded. Moreover, if $\{(\frak{N} _\xi, \Upsilon  _\xi)\}_{\xi \in \mathfrak{S}}$ is a  tight $g-$fusion frame, then $\Upsilon_\xi $  is not represented via an invertible linear operator on  $\hbox{span}\{\frak{N} _\xi\}_{\xi \in \mathfrak{S}}$, We show that, under certain conditions, a linear operator may also be used to express the perturbation of representable fusion frames. Finally, we'll investigate the stability of this fusion frame type.
	\end{abstract}
	\maketitle
	
	\baselineskip=12.4pt
	
	\section{Introduction and preliminaries}

	The concept of frames has emerged as a recent and active area of research in mathematics, as well as in fields like signal processing and computer science. The foundational work on frames for Hilbert spaces was laid out in 1952 by Duffin and Schaefer, particularly for the analysis of nonharmonic Fourier series, as cited in \cite{Duf}. This field gained renewed momentum in 1986 through the significant contributions of Daubechies, Grossmann, and Meyer, as referenced in \cite{Dau}, leading to a broader recognition and application since then.
	
	In more recent developments, numerous mathematicians have expanded the theory of frames beyond Hilbert spaces to encompass Hilbert $C^*$-modules. Detailed explorations of frames in this expanded context can be found in references such as \cite{r4, Kho2, r1, Jin, Pas}. Specifically, A. Khosravi and B. Khosravi have been instrumental in introducing concepts like fusion frames and g-frame theory within Hilbert $C^*$-modules, as discussed in \cite{Kho2}.

	This paper aims to explore $g$-fusion frames, denoted as $\{(\mathfrak{N}_\xi, \Upsilon_\xi)\}_{\xi \in \mathbb{Z}}$, through the lens of a linear operator, which may not be bounded.

	In this paper, we consider $\mathcal{H}$ as a countably generated Hilbert $\mathcal{A}$-module, with $\{\mathcal{K}_\xi\}_{\xi \in \mathbb{Z}}$ representing a set of Hilbert $\mathcal{A}$-modules and $\{\mathfrak{N}_\xi\}_{\xi \in \mathbb{Z}}$ as a sequence of closed orthogonally complemented submodules of $\mathcal{H}$. For each $\xi \in \mathbb{Z}$, we denote by $\operatorname{End}_\mathcal{A}^*(\mathcal{H}, \mathcal{K}_\xi)$ the set of all adjointable $\mathcal{A}$-linear mappings from $\mathcal{H}$ to $\mathcal{K}_{\xi}$, with the notation $\operatorname{End}_\mathcal{A}^*(\mathcal{H}, \mathcal{H})$ being equivalent to $\operatorname{End}_\mathcal{A}^*(\mathcal{H})$. The orthogonal projection onto the closed submodule $\mathfrak{N}_\xi$, which is orthogonally complementary within $\mathcal{H}$, is represented as $\mathcal{P}_{\mathfrak{N}_\xi}$. Furthermore, the notations $\mathcal{N}(\mathcal{T})$ and $\mathcal{R}(\mathcal{T})$ are utilized to signify the kernel and range of the operator $\mathcal{T}$, respectively.

	Let $\Upsilon_0$ be an operator in $\operatorname{End}_\mathcal{A}^*(\mathcal{H})$ with $\Upsilon_0(\mathcal{H}) \subseteq \operatorname{span} \{\mathfrak{N}_\xi\}_{\xi \in \mathbb{Z}}$, and $\mathcal{T}$ as the right-shift operator on
	$$ \ell^2\big( \{\mathfrak{N}_\xi\}_{\xi \in \mathbb{Z}} \big) = \left\{ \{f_\xi\}_{\xi \in \mathbb{Z}} : f_\xi \in \mathfrak{N}_\xi, \|\sum_{\xi \in \mathbb{Z}} \langle f_\xi, f_\xi \rangle \| < \infty \right\}.$$

	That is, the operation $\mathcal{T}$ on the sequence $\{f_\xi\}_{\xi \in \mathbb{Z}}$ yields the sequence $\{f_{\xi +1}\}_{\xi \in \mathbb{Z}}$. Considering two sequences $f = \{f_\xi\}_{\xi \in \mathbb{Z}}$ and $g = \{g_\xi\}_{\xi \in \mathbb{Z}}$, their inner product is articulated as $\langle f, g\rangle = \sum_{\xi \in \mathbb{Z}} \langle f_\xi, g_\xi\rangle$, thereby affirming that $\ell^2\big( \{\mathfrak{N}_\xi\}_{\xi \in \mathbb{Z}} \big)$ functions as a Hilbert $\mathcal{A}$-module.

We now proceed to succinctly revisit the fundamental concepts and characteristics of $C^*$-algebras and Hilbert $\mathcal{A}$-modules."

\begin{definition}\cite{Con}.
	Given a Banach algebra $\mathcal{A}$, we define an involution to be a function that maps an element $u$ to $u^{\ast}$ within $\mathcal{A}$. This function is characterized by the following properties for all elements $u, v \in \mathcal{A}$ and any scalar $\alpha$:
	\begin{enumerate}
		\item  $(u^{\ast})^{\ast}=u$.
		\item  $(uv)^{\ast}=v^{\ast}u^{\ast}$.
		\item  $(\alpha u+v)^{\ast}=\bar{\alpha}u^{\ast}+v^{\ast}$.
	\end{enumerate}
\end{definition}

\begin{definition}\cite{Con}.
	A $C^{\ast}$-algebra, denoted as $\mathcal{A}$, is a specific kind of Banach algebra that encompasses an involution. This algebra is distinctively defined by the property: $$\|u^{\ast}u\| = \|u\|^{2}$$ for every element $u$ in $\mathcal{A}$.
\end{definition}

\begin{definition}\cite{Kap}.
	Consider $\mathcal{A}$ as a unital $C^{\ast}$-algebra and $\mathcal{H}$ as a corresponding left $\mathcal{A}$-module, with compatible linear structures. $\mathcal{H}$ qualifies as a pre-Hilbert $\mathcal{A}$-module if it is equipped with a sesquilinear, positive definite $\mathcal{A}$-valued inner product, denoted $\langle ., . \rangle: \mathcal{H} \times \mathcal{H} \rightarrow \mathcal{A}$, that complies with the module action. Specifically, the following conditions are met:
	\begin{itemize}
		\item [(i)] For any $u \in \mathcal{H}$, the property $\langle u,u\rangle \geq 0$ holds true, and $\langle u,u\rangle = 0$ if and only if $u = 0$.
		\item [(ii)] The inner product satisfies $\langle \eta u+v,w\rangle = \eta\langle u,w\rangle + \langle v,w\rangle$ for all $\eta \in \mathcal{A}$ and for every $u, v, w \in \mathcal{H}$.
		\item [(iii)] The relation $\langle u,v\rangle = \langle v,u\rangle^{\ast}$ is valid for all $u, v \in \mathcal{H}$.
	\end{itemize}
	
	Additionally, for each element $u$ in $\mathcal{H}$, we define its norm as $\|u\| = \|\langle u,u\rangle\|^{\frac{1}{2}}$. $\mathcal{H}$, when complete under this norm, is recognized as a Hilbert $\mathcal{A}$-module or a Hilbert $C^{\ast}$-module over $\mathcal{A}$. For every $\eta$ in the $C^{\ast}$-algebra $\mathcal{A}$, we have $|\eta| = (\eta^{\ast}\eta)^{\frac{1}{2}}$.
\end{definition}

	\begin{definition}
		A sequence $\{\frak{N} _\xi\}_{\xi \in \mathbb{Z}}$  is  called a  \textit{fusion frame}  if there exist constants
		$0 <A \leq B< \infty$ and $v_\xi  > 0$ for $k = 1, 2, \ldots$  so that for every $f \in \mathcal{H}$
		\begin{eqnarray*} 
			A \langle f,f\rangle \leq  \sum_{\xi \in \mathbb{Z}} v_\xi ^2 \langle \mathcal{P}_{\frak{N} _\xi} (f),\mathcal{P}_{\frak{N} _\xi} (f)\rangle  \leq B  \langle f,f\rangle , 
		\end{eqnarray*}
	\end{definition}
	
	Now, let  $\mathcal{H}$ and  $\mathcal{K}$  be Hilbert $\mathcal{A}$-module spaces and  $\{\frak{N} _\xi\}_{\xi   \in  \mathfrak{S}} $  be a sequence of closed orthogonally complemented submodules of $\mathcal{K}$, where  $\mathfrak{S}$  is a subset of  $\mathbb{Z}$.
	
	\begin{definition}
		A  sequence  $\Lambda_\xi \in\operatorname{End}_\mathcal{A}^*\left(\mathcal{H}, \mathcal{K}_\xi\right)$  is called a {\it generalized frame  for $\mathcal{H}$ with respect to} $\{\frak{N} _\xi\}_{ \xi \in  \mathfrak{S} }$. If there are  positive constants $A$ and $B$ such that for all $f \in \mathcal{H}$:
		\begin{eqnarray*}
			A \, \langle  f,f\rangle \leq  \sum_{\xi \in \mathfrak{S}} \langle \Upsilon _\xi  f,\Upsilon _\xi  f\rangle  \leq B \, \langle f,f\rangle ,
		\end{eqnarray*}
		
	\end{definition}
	
	It should be noted that fusion frames are a specific type of g-frame. Hence, setting 
	
	$$\mathcal{H}=\mathcal{K}  \,\,\,\,\,\,\,\,\,\,\,\,  \text{and} \,\,\,\,\,\,\,\,\,\,\, \Upsilon_\xi  = v_\xi \mathcal{P}_{\frak{N} _\xi}.$$
	
	So, any operator whose range is contained in $\frak{N}_\xi$ for $\xi \in \mathfrak{S}$, can naturally be substituted for the orthogonal projections on $\frak{N}_\xi$ to provide an extension of the idea of fusion frame.

	\begin{definition}
		Let $\Upsilon_\xi \in\operatorname{End}_\mathcal{A}^*\left(\mathcal{H}, \frak{N}_\xi\right)$ the sequence $\{(\frak{N} _\xi, \Upsilon_\xi )\}_{\xi  \in  \mathfrak{S}}$ 
		is called a 
		$g-$fusion frame with respect to $\{\frak{N} _\xi \} _{ \xi  \in  \mathfrak{S}} $  ( $g-$fusion frame) if 
	if two distinct constants $A$ and $B$ are present where $0 < A \leq B < \infty$ such that
		\begin{eqnarray}  \nonumber
			A  \langle f,f\rangle  \leq  \sum_{\xi  \in  \mathfrak{S}}  \langle \Upsilon_\xi   (f ), \Upsilon_\xi   (f )\rangle \leq B  \langle f,f\rangle , \quad  f \in \mathcal{H}.
		\end{eqnarray}
		If, $A=B$, the sequence $\{(\frak{N} _\xi, \Upsilon_\xi )\}_{\xi  \in  \mathfrak{S}}$ is called a \emph{tight $g-$fusion frame}. 
	\end{definition}

	The $\emph{synthesis operator}$, $\emph{synthesis operator}$
	$ \mathcal{U}: \ell^2\big( \{\frak{N} _\xi \} _{ \xi  \in  \mathfrak{S}} \big) \rightarrow  \mathcal{H}$ is defined as:
	$$\mathcal{U} \bigg( \{f_\xi \}_{\xi  \in  \mathfrak{S}} \bigg) = \sum_{\xi  \in  \mathfrak{S}} \Upsilon^{*}_{\xi }  (f_\xi ).$$
	The frame operator $S$, mapping $\mathcal{H}$ to itself, is specified by:  
	\begin{eqnarray} \nonumber
	Sf = \sum_{\xi  \in  \mathfrak{S} } {\Upsilon}_{\xi }  ^{*}   {\Upsilon}_{\xi }   f   \nonumber
	\end{eqnarray}
	A $g-$fusion frame $\{(\frak{N} _\xi, \Gamma_\xi )\}_{\xi  \in  \mathfrak{S}}$ is termed a $\emph{dual $g-$fusion frame}$ of $\{(\frak{N} _\xi, \Upsilon_\xi )\}_{\xi  \in  \mathfrak{S}}$ if it fulfills:
	\begin{eqnarray} \nonumber
	f= \sum_{\xi  \in  \mathfrak{S}} \Upsilon_\xi ^{*}  {\Gamma} _\xi   f,  \,\,\,\,\,\,\,\,\,\,\,\,\,\,\,\,\,\,\,\,\,\,  (f \in \mathcal{H}).
	\end{eqnarray}
	
	It is evident that the sequence $\{(\frak{N} _\xi, \Upsilon_\xi  S^{-1})\}_{\xi  \in  \mathfrak{S}}$ forms a dual $g-$fusion frame of $\{(\frak{N} _\xi, \Upsilon_\xi )\}_{\xi  \in  \mathfrak{S}}$, known as the $\textit{canonical dual $g-$fusion frame}$.
	Note that for any $f \in \mathcal{H}$, it is observed that:
	\begin{eqnarray} \nonumber
	f = S^{-1}  S  f  =S^{-1} \sum_{\xi  \in  \mathfrak{S} } {\Upsilon}_{\xi }  ^{*}   {\Upsilon}_{\xi }f=\sum_{\xi  \in  \mathfrak{S}}   {\Upsilon_\xi}^{\ast}  \Upsilon_\xi    S^{-1} f.   \nonumber
	\end{eqnarray}
	
	Let $\mathcal{T}$ be a linear map on $\hbox{span}\{\frak{N} _\xi\}_{\xi  \in  \mathfrak{S}}$ and let $\Upsilon_\xi $ be a sequence in $\operatorname{End}_\mathcal{A}^*\left(\mathcal{H}\right)$, such that the range of $\Upsilon_\xi $ is within $\frak{N} _\xi$ for each $\xi  \in  \mathfrak{S}$.
	
	\begin{definition}
		We say that  $\{\Upsilon_\xi  \}_{\xi  \in  \mathfrak{S}}$
		is represented via $\mathcal{T}$ if for every $ \xi  \in  \mathfrak{S}$,
		$$ \mathcal{T}  \Upsilon_\xi=\Upsilon_{\xi +1}. $$
		
	\end{definition}
	
\begin{definition}
	A $g-$fusion frame, represented as $\{(\frak{N} _\xi, \Upsilon_\xi )\}_{\xi  \in  \mathfrak{S}}$, is considered to be associated with a linear operator $\mathcal{T}$ when each $\Upsilon_\xi$ within the frame is expressed via $\mathcal{T}$.
\end{definition}

	\section{On the  Boundedness of the operator $\mathcal{T}$}
	In the following section,  let $\Upsilon_\xi $ be a sequence in $\operatorname{End}_\mathcal{A}^*\left(\mathcal{H}\right)$ such that for every $\xi  \in  \mathfrak{S}$,  $\mathcal{R}(\Upsilon_\xi) $ is contained in $\frak{N} _\xi$.

	\begin{theorem}
		Assume for each $\xi  \in  \mathfrak{S}$, $\Upsilon_\xi (\frak{N} _\xi)= \frak{N} _\xi$ and $\Upsilon_\xi$ in $\operatorname{End}_\mathcal{A}^*\left(\mathcal{H}\right)$ is self-adjoint. If $\{(\frak{N} _\xi, \Upsilon_\xi )\}_{\xi  \in  \mathfrak{S}}$ constitutes a $g-$fusion frame, depicted by a linear operator $\mathcal{T}$ acting on $\hbox{span} \{\frak{N} _\xi\}_{ \xi  \in  \mathfrak{S}}$, then $\mathcal{T}$ is a bounded operator and  $\mathcal{N}(\mathcal{U})$ remains constant under the application of the right-shift operator. Additionally, the following inequality holds:
		\begin{eqnarray} \nonumber
		1 \leq \|\mathcal{T}\| \leq \sqrt{\dfrac{B}{A}},
		\end{eqnarray}
		with $A$ and $B$ as the frame bounds for $\{\Upsilon_\xi  \}_{\xi  \in  \mathfrak{S}}$.
	\end{theorem}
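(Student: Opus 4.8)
The plan is to first extract from the standing hypotheses a usable local structure for each $\Upsilon_\xi$, then read off $\mathcal{T}$ explicitly on each $\mathfrak{N}_\xi$, and finally control its norm by the frame bounds. I would begin by recording that, since $\Upsilon_\xi$ is self-adjoint with $\mathcal{R}(\Upsilon_\xi)\subseteq\mathfrak{N}_\xi$, one has $\Upsilon_\xi=\mathcal{P}_{\mathfrak{N}_\xi}\Upsilon_\xi=\Upsilon_\xi\mathcal{P}_{\mathfrak{N}_\xi}$, so that $\Upsilon_\xi$ vanishes on $\mathfrak{N}_\xi^{\perp}$ and restricts to a self-adjoint adjointable operator on $\mathfrak{N}_\xi$. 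The hypothesis $\Upsilon_\xi(\mathfrak{N}_\xi)=\mathfrak{N}_\xi$ then says this restriction is surjective; invoking the standard Hilbert $C^{*}$-module fact that a surjective adjointable self-adjoint operator is bounded below, $\Upsilon_\xi|_{\mathfrak{N}_\xi}$ is invertible in $\operatorname{End}_\mathcal{A}^*(\mathfrak{N}_\xi)$. Consequently every $g\in\mathfrak{N}_\xi$ can be written $g=\Upsilon_\xi h$ with $h=(\Upsilon_\xi|_{\mathfrak{N}_\xi})^{-1}g\in\mathfrak{N}_\xi$, and the representation identity $\mathcal{T}\Upsilon_\xi=\Upsilon_{\xi+1}$ gives the closed form
\[
\mathcal{T}g=\mathcal{T}\Upsilon_\xi h=\Upsilon_{\xi+1}h=\Upsilon_{\xi+1}(\Upsilon_\xi|_{\mathfrak{N}_\xi})^{-1}g,\qquad g\in\mathfrak{N}_\xi .
\]
In particular $\mathcal{T}(\mathfrak{N}_\xi)\subseteq\mathfrak{N}_{\xi+1}$, and surjectivity of $\Upsilon_{\xi+1}|_{\mathfrak{N}_{\xi+1}}$ upgrades this to $\mathcal{T}(\mathfrak{N}_\xi)=\mathfrak{N}_{\xi+1}$.

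For the upper estimate I would fix $g=\Upsilon_\xi h\in\mathfrak{N}_\xi$ and compare the two module inner products
\[
\langle \mathcal{T}g,\mathcal{T}g\rangle=\langle\Upsilon_{\xi+1}h,\Upsilon_{\xi+1}h\rangle\le\sum_{\zeta\in\mathfrak{S}}\langle\Upsilon_\zeta h,\Upsilon_\zeta h\rangle\le B\langle h,h\rangle,
\]
using that a single summand is dominated by the whole $g$-fusion frame sum, while from below $\langle g,g\rangle=\langle\Upsilon_\xi h,\Upsilon_\xi h\rangle\ge A\langle h,h\rangle$. Dividing yields $\langle\mathcal{T}g,\mathcal{T}g\rangle\le\frac{B}{A}\langle g,g\rangle$, hence $\|\mathcal{T}g\|\le\sqrt{B/A}\,\|g\|$ on each $\mathfrak{N}_\xi$. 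I expect the genuinely delicate point to be exactly the lower inequality $\langle\Upsilon_\xi h,\Upsilon_\xi h\rangle\ge A\langle h,h\rangle$ for $h\in\mathfrak{N}_\xi$: the frame lower bound only controls the full sum $\sum_\zeta\langle\Upsilon_\zeta h,\Upsilon_\zeta h\rangle$, so isolating the single diagonal term is where the hypotheses $\Upsilon_\xi(\mathfrak{N}_\xi)=\mathfrak{N}_\xi$ and self-adjointness must be brought to bear (equivalently, the lower modulus of $\Upsilon_\xi|_{\mathfrak{N}_\xi}$ must be shown to be at least $\sqrt{A}$). A second, related obstacle is passing from the per-subspace bound to a bound on the algebraic span $\operatorname{span}\{\mathfrak{N}_\xi\}$: for a finite sum $g=\sum_i g_{\xi_i}$ cross terms appear, so I would secure boundedness on the span either by combining the uniform constant $\sqrt{B/A}$ with the closability of $\mathcal{T}$, or by first establishing the intertwining relation below and deducing boundedness from invariance of $\mathcal{N}(\mathcal{U})$.

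For the lower estimate $\|\mathcal{T}\|\ge 1$ I would argue by contradiction, exploiting two-sidedness of the index set. From $\Upsilon_{\xi+1}=\mathcal{T}\Upsilon_\xi$ one gets $\|\Upsilon_{\xi+1}\|\le\|\mathcal{T}\|\,\|\Upsilon_\xi\|$ for every $\xi$, whereas the frame upper bound forces $\|\Upsilon_\zeta\|\le\sqrt{B}$ uniformly and $\Upsilon_\xi(\mathfrak{N}_\xi)=\mathfrak{N}_\xi\neq\{0\}$ forces $\|\Upsilon_\xi\|>0$. If $\|\mathcal{T}\|<1$, iterating the inequality backwards gives $\|\Upsilon_{\xi_0-n}\|\ge\|\mathcal{T}\|^{-n}\|\Upsilon_{\xi_0}\|\to\infty$ as $n\to\infty$, contradicting $\|\Upsilon_{\xi_0-n}\|\le\sqrt{B}$; hence $\|\mathcal{T}\|\ge 1$.

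Finally, for the invariance of $\mathcal{N}(\mathcal{U})$ under the right shift $\mathcal{T}_R$ (with $(\mathcal{T}_R c)_\xi=c_{\xi-1}$), I would establish the intertwining relation. Using $\Upsilon_\xi^{*}=\Upsilon_\xi$ and $\Upsilon_{\zeta+1}=\mathcal{T}\Upsilon_\zeta$,
\[
\mathcal{U}(\mathcal{T}_R c)=\sum_{\xi\in\mathfrak{S}}\Upsilon_\xi^{*}\,c_{\xi-1}=\sum_{\zeta\in\mathfrak{S}}\Upsilon_{\zeta+1}c_\zeta=\mathcal{T}\sum_{\zeta\in\mathfrak{S}}\Upsilon_\zeta c_\zeta=\mathcal{T}\,\mathcal{U}(c),
\]
so $\mathcal{U}\mathcal{T}_R=\mathcal{T}\mathcal{U}$; the interchange of $\mathcal{T}$ with the series is justified first for finitely supported $c$ by linearity and then in general by the boundedness of $\mathcal{T}$ already proved, together with the continuity of $\mathcal{U}$ and $\mathcal{T}_R$. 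Thus $c\in\mathcal{N}(\mathcal{U})$ implies $\mathcal{U}(\mathcal{T}_R c)=\mathcal{T}\,0=0$, that is $\mathcal{T}_R c\in\mathcal{N}(\mathcal{U})$, which is the asserted invariance. Combining the two norm estimates yields $1\le\|\mathcal{T}\|\le\sqrt{B/A}$, completing the argument.
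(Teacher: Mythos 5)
Your argument for the upper bound $\|\mathcal{T}\|\le\sqrt{B/A}$ has a genuine gap, and it is exactly the one you flag yourself. The inequality $\langle\Upsilon_\xi h,\Upsilon_\xi h\rangle\ge A\langle h,h\rangle$ for $h\in\mathfrak{N}_\xi$ does not follow from the frame condition: the lower frame bound controls only the full sum $\sum_{\zeta}\langle\Upsilon_\zeta h,\Upsilon_\zeta h\rangle$, and a single diagonal term can be arbitrarily small even under the hypotheses $\Upsilon_\xi^{*}=\Upsilon_\xi$ and $\Upsilon_\xi(\mathfrak{N}_\xi)=\mathfrak{N}_\xi$ (take two indices with $\mathfrak{N}_{\xi_1}=\mathfrak{N}_{\xi_2}$, $\Upsilon_{\xi_1}=\varepsilon\,\mathcal{P}_{\mathfrak{N}_{\xi_1}}$ and $\Upsilon_{\xi_2}=\mathcal{P}_{\mathfrak{N}_{\xi_2}}$: the sum still satisfies a lower bound near $1$ while the $\xi_1$-term is $\varepsilon^2\langle h,h\rangle$). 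So your per-subspace estimate $\langle\mathcal{T}g,\mathcal{T}g\rangle\le\frac{B}{A}\langle g,g\rangle$ on $\mathfrak{N}_\xi$ is not established. The second obstacle you name, passing from bounds on each $\mathfrak{N}_\xi$ to a bound on $\operatorname{span}\{\mathfrak{N}_\xi\}$, is also real and is not closed by either of your suggested remedies: closability of $\mathcal{T}$ is nowhere established, and deducing boundedness from the invariance of $\mathcal{N}(\mathcal{U})$ is circular in your ordering, since you justify the interchange of $\mathcal{T}$ with the series in the intertwining relation by appealing to the boundedness "already proved."

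The paper avoids both problems by working through the synthesis operator rather than subspace by subspace: it writes an element of the span as $\mathcal{U}(\{g_\xi\}+\{h_\xi\})$ with $\{g_\xi\}\in\mathcal{N}(\mathcal{U})^{\perp}$ finitely supported and $\{h_\xi\}\in\mathcal{N}(\mathcal{U})$, uses $\mathcal{T}\mathcal{U}(\{g_\xi\})=\sum_\xi\Upsilon_{\xi+1}g_\xi$ together with the upper synthesis bound $\|\sum_\xi\Upsilon_{\xi+1}g_\xi\|^2\le B\,\|\sum_\xi\langle g_\xi,g_\xi\rangle\|$ and the lower bound $A\,\|\sum_\xi\langle g_\xi,g_\xi\rangle\|\le\|\mathcal{U}(\{g_\xi\})\|^2$ valid on $\mathcal{N}(\mathcal{U})^{\perp}$. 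Both inequalities involve the full frame sums, so no single-term lower bound is ever needed, and the cross-term issue disappears because the estimate is obtained directly for arbitrary finite combinations. Your arguments for $\|\mathcal{T}\|\ge 1$ (backward iteration of $\|\Upsilon_{\xi+1}\|\le\|\mathcal{T}\|\,\|\Upsilon_\xi\|$ against the uniform bound $\|\Upsilon_\zeta\|\le\sqrt{B}$) and for the shift-invariance of $\mathcal{N}(\mathcal{U})$ (the intertwining $\mathcal{U}\mathcal{T}_R=\mathcal{T}\mathcal{U}$) are sound and essentially match the paper's, but the central upper estimate needs to be redone along the synthesis-operator route.
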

	
	\begin{proof}
		If $\{g_\xi  \}_{\xi  \in  \mathfrak{S}}$ is a sequence in $\mathcal{N}(\mathcal{U})^{\perp}$
		such that $g_\xi =0$ for all a finite number of $\xi$, then 
		\begin{eqnarray} \nonumber
			\langle \mathcal{T} \mathcal{U} \bigg( \{g_\xi \}_{\xi  \in  \mathfrak{S}} \bigg),\mathcal{T} \mathcal{U} \bigg( \{g_\xi \}_{\xi  \in  \mathfrak{S}} \bigg)\rangle &\leq & B  \sum_{\xi  \in  \mathfrak{S} } \langle g_\xi ,g_\xi \rangle  \nonumber \\ &\leq & \frac{B}{A}   \langle \mathcal{U}  ( \{g_\xi \}_{\xi  \in  \mathfrak{S}} ),\mathcal{U}( \{g_\xi \}_{\xi  \in  \mathfrak{S}} ) \rangle.\nonumber 
		\end{eqnarray}
		This implies that
		\begin{eqnarray}
			\langle \mathcal{T} \mathcal{U}  (\{g_\xi \}_{\xi  \in  \mathfrak{S}} + \{h_\xi \}_{\xi  \in  \mathfrak{S}}),\mathcal{T} \mathcal{U}  (\{g_\xi \}_{\xi  \in  \mathfrak{S}} + \{h_\xi \}_{\xi  \in  \mathfrak{S}})\rangle  \leq \frac{B}{A}   \| \mathcal{U}  (\{g_\xi  \}_{\xi  \in  \mathfrak{S}} +\{h_\xi  \}_{\xi  \in  \mathfrak{S}}  ) \|^2. 
		\end{eqnarray}
		For all  sequence $\{h_\xi \}_{\xi  \in  \mathfrak{S}}$ in $\mathcal{N}(\mathcal{U})$
		From (2.1) and $\hbox{span}  \{\frak{N} _\xi\}_{\xi  \in  \mathfrak{S}}$  is a subspace of  $\mathcal{R}(\mathcal{U})$, We conclude that for all  $f \in \hbox{span}\{\frak{N} _\xi\}_{\xi  \in  \mathfrak{S}} $,
		\begin{eqnarray} \nonumber
			\langle \mathcal{T}  (f),\mathcal{T}  (f)\rangle  \leq \frac{B}{A}  \langle f,f\rangle ,
		\end{eqnarray}
		Therefore, $\mathcal{T}$ is bounded and $$\|\mathcal{T}\| \leq \sqrt{\dfrac{B}{A}}.$$ 
		Since  $\{(\frak{N} _\xi, \Upsilon_\xi )\}_{\xi  \in  \mathfrak{S}}$  is a 
		$g-$fusion frame  represented via $\mathcal{T}$, there exists $f_0 \in \mathcal{H}$ such that  $\sum_{\xi  \in  \mathfrak{S}}\langle\Upsilon_\xi  (f_0),\Upsilon_\xi  (f_0) \rangle \not =0$. On the other  hand,
		\begin{eqnarray} \nonumber
			\sum_{\xi  \in  \mathfrak{S}}    \bigg  \langle   \Upsilon_\xi   (f_0), \Upsilon_\xi   (f_0)  \bigg \rangle  &=&  \sum_{\xi  \in  \mathfrak{S}}    \bigg  \langle   \Upsilon_{\xi +1}  (f_0), \Upsilon_{\xi +1}  (f_0)  \bigg \rangle  \\ \nonumber &=&   \sum_{\xi  \in  \mathfrak{S}}    \bigg  \langle   \mathcal{T}\Upsilon_\xi   (f_0),\mathcal{T} \Upsilon_\xi   (f_0)  \bigg \rangle       \\ \nonumber &\leq & \|\mathcal{T}\|^2   \sum_{\xi  \in  \mathfrak{S}}    \bigg  \langle   \Upsilon_\xi   (f_0), \Upsilon_\xi   (f_0)  \bigg \rangle . \nonumber 
		\end{eqnarray}
		This shows that $\|\mathcal{T}\| \geq 1$.

		To finish the proof, let $\{f_\xi \}_{\xi  \in  \mathfrak{S}} \in\mathcal{N}(\mathcal{U})$. Then $\sum_{\xi  \in  \mathfrak{S}} \Upsilon_\xi  (f_\xi  ) =0$. Since $\mathcal{T}$ is bounded and
		$\mathcal{T} \Upsilon_\xi = \Upsilon_{\xi +1}$, we have
		$$
		\begin{aligned}
			\sum_{\xi  \in  \mathfrak{S}} \Upsilon_{\xi }(f_{\xi  -1}) &= \sum_{\xi  \in  \mathfrak{S}} \Upsilon_{\xi +1} (f_{\xi })\\  &= \sum_{\xi  \in  \mathfrak{S}} \mathcal{T} \Upsilon_\xi (f_{\xi })\\&=0.
		\end{aligned}
		$$
		Since $\Upsilon_\xi $  be  self-adjoint
		for all $\xi  \in  \mathfrak{S}$.
		We conclude that $$\mathcal{U}  \mathcal{T} (\{f_\xi \}_{\xi  \in  \mathfrak{S}})=0.$$   That  is, $\mathcal{T}(\{f_\xi \}_{\xi  \in  \mathfrak{S}}) \in\mathcal{N}(\mathcal{U})$.
	\end{proof}
	\begin{theorem}
		Let $\{(\frak{N} _\xi, \Upsilon_\xi )\}_{\xi  \in  \mathfrak{S}}$ be a  tight $g-$fusion frame such that  $\Upsilon_\xi $ is self-adjoint and $\Upsilon_\xi (\frak{N} _\xi)=\frak{N} _\xi$
		for all $\xi  \in  \mathfrak{S}$.
		Then  $\Upsilon_\xi $  is not represented via an invertible linear operator $\mathcal{T}$ on  $\hbox{span}\{\frak{N} _\xi\}_{\xi  \in  \mathfrak{S}}$.
	\end{theorem}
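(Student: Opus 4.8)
The plan is to argue by contradiction. Suppose $\mathcal{T}$ is an invertible linear operator on $\hbox{span}\{\frak{N}_\xi\}_{\xi\in\mathfrak{S}}$ that represents $\{(\frak{N}_\xi,\Upsilon_\xi)\}_{\xi\in\mathfrak{S}}$, so that $\mathcal{T}\Upsilon_\xi=\Upsilon_{\xi+1}$ for every $\xi$. Since the frame is tight, $A=B$, so the preceding theorem applies and yields that $\mathcal{T}$ is bounded with
\begin{eqnarray*}
\|\mathcal{T}\|\leq\sqrt{\tfrac{B}{A}}=1.
\end{eqnarray*}
The first step is to obtain the companion bound for the inverse.

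Because $\mathcal{T}$ is invertible, the relation $\mathcal{T}\Upsilon_\xi=\Upsilon_{\xi+1}$ rewrites as $\mathcal{T}^{-1}\Upsilon_{\xi+1}=\Upsilon_\xi$. Setting $\Psi_\xi:=\Upsilon_{-\xi}$ and $\mathfrak{M}_\xi:=\frak{N}_{-\xi}$, the reindexed family $\{(\mathfrak{M}_\xi,\Psi_\xi)\}_{\xi\in\mathfrak{S}}$ is again a tight $g$-fusion frame with bound $A$, each $\Psi_\xi$ is self-adjoint with $\Psi_\xi(\mathfrak{M}_\xi)=\mathfrak{M}_\xi$, and it is represented via $\mathcal{T}^{-1}$, since $\mathcal{T}^{-1}\Psi_\xi=\Psi_{\xi+1}$. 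As $\mathcal{T}^{-1}$ acts on the same space $\hbox{span}\{\mathfrak{M}_\xi\}_{\xi\in\mathfrak{S}}=\hbox{span}\{\frak{N}_\xi\}_{\xi\in\mathfrak{S}}$, all hypotheses of the preceding theorem are satisfied and we conclude $\|\mathcal{T}^{-1}\|\leq 1$ as well. I expect this symmetric reindexing to be the main point to get right, because it is exactly what legitimately allows one to invoke the preceding theorem for the inverse operator.

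With both norm bounds in hand, I would iterate the representation to write $\Upsilon_\xi=\mathcal{T}^{\xi}\Upsilon_0$ for every $\xi$ (well defined for all integers thanks to invertibility). Since the lower frame bound $A$ is strictly positive, there is some $f_0\in\mathcal{H}$ with $\Upsilon_0 f_0\neq 0$. Using $\Upsilon_0 f_0=\mathcal{T}^{-\xi}(\Upsilon_\xi f_0)$ for $\xi\geq 0$ and $\Upsilon_0 f_0=\mathcal{T}^{\,|\xi|}(\Upsilon_\xi f_0)$ for $\xi\leq 0$, together with $\|\mathcal{T}\|\leq 1$ and $\|\mathcal{T}^{-1}\|\leq 1$, one gets
\begin{eqnarray*}
\|\Upsilon_\xi f_0\|\geq\|\Upsilon_0 f_0\|>0\qquad\text{for all }\xi\in\mathfrak{S}.
\end{eqnarray*}

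Finally, I would invoke the defining property of the frame: the series $\sum_{\xi\in\mathfrak{S}}\langle\Upsilon_\xi f_0,\Upsilon_\xi f_0\rangle$ is bounded above by $B\langle f_0,f_0\rangle$, so $\{\Upsilon_\xi f_0\}_{\xi\in\mathfrak{S}}$ lies in $\ell^2(\{\frak{N}_\xi\})$ and the series converges in norm in $\mathcal{A}$. A norm-convergent series of positive elements of a $C^*$-algebra must have terms tending to $0$ in norm, whence $\|\Upsilon_\xi f_0\|^2=\|\langle\Upsilon_\xi f_0,\Upsilon_\xi f_0\rangle\|\to 0$, contradicting the uniform lower bound $\|\Upsilon_\xi f_0\|\geq\|\Upsilon_0 f_0\|>0$. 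This contradiction shows that no invertible $\mathcal{T}$ can represent a tight $g$-fusion frame, which is the assertion. The delicate point here is the convergence fact forcing the terms to zero; it is precisely where the Hilbert $C^*$-module norm, rather than mere boundedness of the partial sums, is used.
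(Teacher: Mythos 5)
Your proposal is correct and follows essentially the same route as the paper: invoke the preceding boundedness theorem for both $\mathcal{T}$ and $\mathcal{T}^{-1}$, use tightness ($A=B$) to force $\|\mathcal{T}\|,\|\mathcal{T}^{-1}\|\leq 1$, and then contradict the norm-convergence of $\sum_{\xi\in\mathfrak{S}}\langle\Upsilon_\xi f_0,\Upsilon_\xi f_0\rangle$ via a uniform lower bound on $\|\Upsilon_\xi f_0\|$. Your version is in fact slightly cleaner at the end (you avoid the paper's detour through the isometry identity and the dual-frame reconstruction, and your explicit reindexing $\Psi_\xi=\Upsilon_{-\xi}$ makes the application of the theorem to $\mathcal{T}^{-1}$ more honest than the paper's ``similarly''), but the underlying argument is the same.
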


	\begin{proof}
		suppose by the absurd that
		$\{ (\frak{N} _\xi , \Upsilon_\xi  )\}_{\xi  \in  \mathfrak{S}}$ be a $g-$fusion frame  represented via an invertible linear operator $\mathcal{T}$ on $\hbox{span} \{\frak{N} _\xi\}_{\xi  \in  \mathfrak{S}}$.
		
		So
		$\mathcal{T}^\xi\Upsilon_0=\Upsilon_\xi $
		and so
		$\Upsilon_{-\xi}=\mathcal{T}^{-\xi}\Upsilon_0$ for all $\xi  \in  \mathfrak{S}$.
		Similarly, replacing $\mathcal{T}$  by $\mathcal{T}^{-1}$ in the proof of Theorem 2.1, we get
		\begin{eqnarray} 
			1 \leq \|\mathcal{T}^{m}\| \leq \sqrt{\dfrac{B}{A}},
		\end{eqnarray}
		where $A$ and $B$ are  frame bounds of $\{ (\frak{N} _\xi , \Upsilon_\xi  )\}_{\xi  \in  \mathfrak{S}}$ and $m\in\{-1, 1\}$.
		
		Since $\{ (\frak{N} _\xi , \Upsilon_\xi  )\}_{\xi  \in  \mathfrak{S}}$ is a tight $g-$fusion frame,
		then  $A=B$. So by (2.2)
		\begin{eqnarray} \nonumber
			\|\mathcal{T}\|=\|\mathcal{T}^{-1}\|=1.
		\end{eqnarray}
		Therefore  for every $f \in \mathcal{H}$,
		
		$$
		\begin{aligned}
			\|\langle f,f \rangle \|^{\frac{1}{2}}&= \|\langle \mathcal{T} ^{-1}  \mathcal{T} f,\mathcal{T} ^{-1}  \mathcal{T} f \rangle\|^{\frac{1}{2}} \\ &\leq \|\langle Tf,Tf\rangle\|^{\frac{1}{2}} \\ &\leq \|\langle\ f,f\rangle\|^{\frac{1}{2}} .
		\end{aligned}
		$$
		
		This shows that  $\mathcal{\mathcal{T}}$ is an isometry.
		Thus  for every $f \in \mathcal{H}$ and $\xi  \in  \mathfrak{S}$,
		\begin{eqnarray} \nonumber
			\|\langle \Upsilon_\xi   f,\Upsilon_\xi   f\rangle \|^{\frac{1}{2}}= \|\langle \mathcal{T}^\xi  \Upsilon_0  f,\mathcal{T}^\xi \Upsilon_0  f \rangle \|^{\frac{1}{2}} = \|\langle \Upsilon_0 f,\Upsilon_0 f \rangle \|^{\frac{1}{2}}.
		\end{eqnarray}
		Hence
		\begin{eqnarray} \nonumber
			\sum_{\xi \in {\mathfrak{S}}} \langle \Upsilon_0 f,\Upsilon_0 f \rangle =\sum_{\xi \in {\mathfrak{S}}}\langle \Upsilon_\xi  f,\Upsilon_\xi  f \rangle \leq B \langle f,f\rangle .
		\end{eqnarray}
		It follows that
		$\sum_{\xi \in {\mathfrak{S}}}\langle \Upsilon_0 f,\Upsilon_0 f \rangle$
		is a convergent series. So
		\begin{eqnarray} \nonumber
			\|\langle \Upsilon_\xi   f,\Upsilon_\xi   f\rangle \|^{\frac{1}{2}}= \|\langle \Upsilon_0 f,\Upsilon_0 f \rangle \|^{\frac{1}{2}}=0
		\end{eqnarray}
		for all $f\in \mathcal{H}$ and $\xi  \in  \mathfrak{S}$.
		Therefore  for every $f\in \mathcal{H}$ we have
		$$
		f=\sum_{\xi \in {\mathfrak{S}}}\Upsilon_\xi ^*\Gamma_\xi  f=0,$$
		where $\{(\frak{N} _\xi, \Gamma_\xi ) \}_{\xi  \in  \mathfrak{S}}$ is a dual
		$g-$fusion frame of $\{(\frak{N} _\xi, \Upsilon_\xi )\}_{\xi  \in  \mathfrak{S}}$, a contradiction.
	\end{proof}
	\section{Stability and Linear Independence  }
	\begin{lemma}
		Let $\{\Upsilon_\xi  \}_{\xi  \in  \mathfrak{S}} $  be a sequence in  $\operatorname{End}_\mathcal{A}^*(\mathcal{H})$ with $\mathcal{R}(\Upsilon_\xi) \subset\frak{N} _\xi$ for all $\xi  \in  \mathfrak{S}$. If 
		$\{\Upsilon_\xi  \}_{\xi  \in  \mathfrak{S}} $ is represented via a  linear operator $\mathcal{T}$ on
		$ \hbox{span} \{\frak{N} _\xi \}_{\xi  \in  \mathfrak{S}} $
		and   $ \hbox{span} \{\Upsilon_\xi  \}_{\xi  \in  \mathfrak{S}} $
		is   infinite dimensional, then 
		$ \{\Upsilon_\xi  \}_{\xi  \in  \mathfrak{S}} $
		is linearly independent and infinite.
	\end{lemma}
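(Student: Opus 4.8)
The plan is to handle the two conclusions separately: infiniteness follows from a direct dimension count, while linear independence is proved by contraposition using the shift identity. First I would dispose of ``infinite'': if the family $\{\Upsilon_\xi\}_{\xi\in\mathfrak{S}}$ consisted of only finitely many distinct operators, then $\operatorname{span}\{\Upsilon_\xi\}_{\xi\in\mathfrak{S}}$ would be finite dimensional, contradicting the hypothesis; hence there are infinitely many distinct terms. For independence, suppose toward a contradiction that some nontrivial finite combination vanishes, say $\sum_{j=1}^{m} c_j\Upsilon_{\xi_j}=0$ with $\xi_1<\cdots<\xi_m$ and $c_m\neq 0$.

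The key device is the representation identity $\mathcal{T}\Upsilon_\xi=\Upsilon_{\xi+1}$. Since $\mathcal{T}$ is linear on $\operatorname{span}\{\mathfrak{N}_\xi\}_{\xi\in\mathfrak{S}}$ and each $\Upsilon_\xi$ has range inside $\mathfrak{N}_\xi\subset\operatorname{span}\{\mathfrak{N}_\xi\}_{\xi\in\mathfrak{S}}$, one may legitimately apply $\mathcal{T}$ to the dependence relation, obtaining $\sum_{j=1}^{m}c_j\Upsilon_{\xi_j+1}=0$; iterating gives $\sum_{j=1}^{m}c_j\Upsilon_{\xi_j+n}=0$ for every integer $n\ge 0$. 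Solving for the top term yields $\Upsilon_{\xi_m+n}=-c_m^{-1}\sum_{j=1}^{m-1}c_j\Upsilon_{\xi_j+n}$, which expresses each $\Upsilon_k$ with $k\ge\xi_m$ as a scalar combination of operators with strictly smaller indices lying in the window of width $d:=\xi_m-\xi_1$ just below $k$.

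I would then run a strong induction on $k$ to show that every $\Upsilon_k$ with $k\ge\xi_1$ lies in the fixed finite-dimensional space $W:=\operatorname{span}\{\Upsilon_{\xi_1},\dots,\Upsilon_{\xi_m-1}\}$, whose dimension is at most $d$. The base cases $\xi_1\le k\le\xi_m-1$ are immediate, and for $k\ge\xi_m$ the displayed recurrence writes $\Upsilon_k$ through indices that are all strictly smaller yet still $\ge\xi_1$, hence in $W$ by the inductive hypothesis. Consequently $\operatorname{span}\{\Upsilon_\xi:\xi\in\mathfrak{S},\ \xi\ge\xi_1\}\subseteq W$ is finite dimensional, and adjoining the remaining finitely many lower indices forces $\operatorname{span}\{\Upsilon_\xi\}_{\xi\in\mathfrak{S}}$ to be finite dimensional, contradicting the hypothesis; thus the family is linearly independent. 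The step I expect to be the main obstacle is precisely this propagation-and-collapse mechanism: one must verify that a single algebraic relation, pushed forward by $\mathcal{T}$, genuinely traps the entire forward span into the finite window $W$ (equivalently, that the linear recurrence induced by $\mathcal{T}$ has solution space of dimension at most $d$), while keeping careful track of the index bookkeeping so that no term escapes the finite-dimensional trap.
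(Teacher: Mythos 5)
Your forward-propagation argument is sound and is essentially the paper's mechanism, but it only covers half of the index set, and the sentence you use to dismiss the other half is where the proof breaks. You claim that after trapping $\operatorname{span}\{\Upsilon_k : k\ge \xi_1\}$ in the finite window $W$, one need only ``adjoin the remaining finitely many lower indices.'' In this paper the index set $\mathfrak{S}$ is $\mathbb{Z}$ (the shift is two-sided throughout: the representation is $\mathcal{T}\Upsilon_\xi=\Upsilon_{\xi+1}$ for $\xi\in\mathbb{Z}$, and Theorem 2.2 explicitly uses $\Upsilon_{-\xi}=\mathcal{T}^{-\xi}\Upsilon_0$). So there are infinitely many indices below $\xi_1$, and applying $\mathcal{T}$ to the dependence relation only shifts indices upward; it produces no relation whatsoever among the $\Upsilon_k$ with $k<\xi_1$. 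Your induction therefore leaves $\operatorname{span}\{\Upsilon_k : k<\xi_1\}$ completely uncontrolled, and the contradiction with infinite-dimensionality of the full span does not follow.

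The paper's proof closes exactly this gap by solving the dependence relation for \emph{both} extreme coefficients: it writes $\Upsilon_b=\sum_{\xi=\alpha}^{b-1}d_\xi\Upsilon_\xi$ \emph{and} $\Upsilon_\alpha=\sum_{\xi=\alpha+1}^{b}d'_\xi\Upsilon_\xi$, then propagates the first identity forward with $\mathcal{T}^{i}$ and the second backward with $\mathcal{T}^{-i}$, so that the window span is invariant in both directions and every $\Upsilon_\xi$, $\xi\in\mathbb{Z}$, collapses into it. (You should be aware that the backward step is itself the delicate point: it presupposes that $\mathcal{T}^{-1}$ acts on the relevant span, which is not among the lemma's hypotheses, so the negative-index direction cannot simply be waved away -- it is where the real work, or the real hypothesis, is needed.) To repair your proof you must add a second, downward strong induction showing $\Upsilon_k\in W$ for $k<\xi_1$ as well, using the relation solved for the \emph{lowest}-index term together with whatever form of backward shift-invariance you can justify. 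Your treatment of ``infinite'' and the forward induction itself are fine.
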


	\begin{proof}
		Assume that
		$\{\Upsilon_\xi  \}_{\xi  \in  \mathfrak{S}} $
		is linearly dependent. Hence there exist constants
		$\delta_p, \ldots, \delta_q$ such that
		\begin{eqnarray} \nonumber
			\sum_{\xi =p}^{q} \delta_\xi  \Upsilon_\xi  =0
		\end{eqnarray}
		and  $ \delta_{\xi _0} \not = 0$, for some $ p \leq \xi_0 \leq q$. Set
		\begin{eqnarray} \nonumber
			\alpha= \min \bigg\{ \xi : \,\,\,  p \leq \xi \leq q, \delta_\xi  \not =0 \bigg \} \,\,\,\, \hbox{and}  \,\,\,\,\,
			b= \max \bigg\{ \xi : \,\,\, p \leq \xi \leq q, \delta_\xi  \not =0 \bigg \}.
		\end{eqnarray}
		Then
		\begin{eqnarray} \nonumber
			\Upsilon_{b} = \sum_{\xi  =\alpha}^{b-1} d_\xi   \Upsilon_\xi   \,\,\,\,\,\,\,\,\, \,\,\,\,\,\,\,\,\,   \hbox{and}  \,\,\,\,\,\,\,\,\, \,\,\,\,\,\,\,\,\,
			\Upsilon_{\alpha} = \sum_{\xi  =\alpha+1}^{b} d_\xi ^{\prime} \Upsilon_\xi 
		\end{eqnarray}
		for certain constants
		$\alpha_ \xi $ and $\alpha_ \xi ^{\prime}$, $\xi= \alpha, \ldots, b$.
		Thus for every $i \in \mathbb{N}$, we have
		
		$$
		\begin{aligned}
			\mathcal{T}^i  \Upsilon_{b} &= \sum_{\xi =\alpha} ^{b -1} d_\xi   \mathcal{T}^i  \Upsilon_\xi 
			\\ &=\sum_{\xi=\alpha} ^{b -1} d_\xi   \Upsilon_{\xi +i }
			\\ &= \sum_{\xi =\alpha+i} ^{b+ i -1} d_{\xi -i} \Upsilon_{\xi  }.
		\end{aligned}
		$$
		
		Similarly,
		$$\mathcal{T}^{-i}  \Upsilon_{b} =  \sum_{\xi =\alpha-i} ^{b-1-i } d_{\xi -i}^{'}   \Upsilon_{\xi  }.$$
		Therefore, in addition to
		$\Upsilon_\xi = \mathcal{T}^\xi  \Upsilon_0$ show that
		$\hbox{span} \big \{ \Upsilon_\xi  \big \}_{ \alpha, \ldots, b}$ is invariant under $\mathcal{T}$ and $\mathcal{T}^{-1}$. Also, we have
		\begin{eqnarray} \nonumber
			\hbox{span} \big\{ \Upsilon_\xi   \big\}_{ \xi  \in  \mathfrak{S} }=\hbox{span} \big \{ \Upsilon_\xi  \big \}_{ \alpha, \ldots, b}.
		\end{eqnarray}
		Hence,
		$ \hbox{span} \big \{ \Upsilon_\xi  \big\}_{\xi  \in  \mathfrak{S}}$
		is  finite dimensional, a contradiction.
	\end{proof}
	As an obvious consequence of Lemma 3.1, we get the following results.
	\begin{proposition} 
		Let $\{ (\frak{N} _\xi , \Upsilon_\xi  )\}_{\xi  \in  \mathfrak{S}}$ be a $g-$fusion frame such that $\Upsilon_\xi $ is self-adjoint and $\Upsilon_\xi (\frak{N} _\xi)=\frak{N} _\xi$. Assume that  $\mathcal{T}$ is a linear operator on $\hbox{span}\{\frak{N} _\xi\}_{\xi \in{\mathfrak{S}}}$ such that it has an extension to a  bounded linear operator $\tilde{\mathcal{T}}: \mathcal{H}\rightarrow \mathcal{H}$ with $\tilde{\mathcal{T}}
		\Upsilon_\xi ^*=\Upsilon_{\xi +1}^*$. Let  there exist $n_0  \in  \mathfrak{S}$ such that $\Upsilon_{n_0}$ is surjective  and for every $\xi  \in  \mathfrak{S}$.
		
		If $\{\Upsilon_\xi  \}_{\xi  \in  \mathfrak{S}}$ is represented via  $\mathcal{T}$,
		then $\{\Upsilon_\xi  \}_{\xi  \in  \mathfrak{S}}$ is linearly dependent.
	\end{proposition}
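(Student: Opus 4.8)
The plan is to deduce the statement from Lemma 3.1 by contraposition. For a family represented via $\mathcal{T}$, that lemma supplies the dichotomy that either $\hbox{span}\{\mathfrak{N}_\xi\}_{\xi\in\mathfrak{S}}$-supported family $\{\Upsilon_\xi\}_{\xi\in\mathfrak{S}}$ has finite dimensional span or it is linearly independent (its span being infinite dimensional in that case). Since $\mathfrak{S}$ is infinite and an infinite family contained in a finite dimensional space is automatically linearly dependent, it suffices to rule out the independent alternative; equivalently, I would assume $\{\Upsilon_\xi\}_{\xi\in\mathfrak{S}}$ linearly independent, so that $\hbox{span}\{\Upsilon_\xi\}_{\xi\in\mathfrak{S}}$ is infinite dimensional, and then derive a contradiction from the surjectivity hypothesis, the adjoint intertwining relation, and the frame bounds.

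First I would unwind the algebra of the hypotheses. Taking adjoints in $\tilde{\mathcal{T}}\Upsilon_\xi^{*}=\Upsilon_{\xi+1}^{*}$ and using the self-adjointness of each $\Upsilon_\xi$ gives the right intertwiner $\Upsilon_{\xi+1}=\Upsilon_\xi\tilde{\mathcal{T}}^{*}$, which together with the representation $\mathcal{T}\Upsilon_\xi=\Upsilon_{\xi+1}$ yields $\mathcal{T}\Upsilon_\xi=\Upsilon_\xi\tilde{\mathcal{T}}^{*}$ for every $\xi\in\mathfrak{S}$. Because $\Upsilon_{n_0}$ is self-adjoint, $\mathcal{N}(\Upsilon_{n_0})=\mathcal{R}(\Upsilon_{n_0})^{\perp}$, so its surjectivity forces it to be injective, hence invertible, and forces $\mathfrak{N}_{n_0}=\mathcal{R}(\Upsilon_{n_0})=\mathcal{H}$. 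Consequently $\hbox{span}\{\mathfrak{N}_\xi\}_{\xi\in\mathfrak{S}}=\mathcal{H}$, the operator $\mathcal{T}$ agrees with the bounded operator $\tilde{\mathcal{T}}$ on all of $\mathcal{H}$, and from $\mathcal{T}\Upsilon_{n_0}=\Upsilon_{n_0}\tilde{\mathcal{T}}^{*}$ together with the invertibility of $\Upsilon_{n_0}$ I can write every frame element in the clean form $\Upsilon_\xi=\mathcal{T}^{\xi-n_0}\Upsilon_{n_0}=\Upsilon_{n_0}(\tilde{\mathcal{T}}^{*})^{\xi-n_0}$.

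The analytic heart of the argument, and the step I expect to be the main obstacle, is converting the frame estimates into a statement forbidding the independent alternative. Applying the mechanism of Theorem 2.1 not only to $\mathcal{T}$ but to each iterate (the frame is equally represented by $\mathcal{T}^{k}$ through $\mathcal{T}^{k}\Upsilon_\xi=\Upsilon_{\xi+k}$) bounds the powers by $1\leq\|\mathcal{T}^{k}\|\leq\sqrt{B/A}$; once the surjectivity is used to promote $\mathcal{T}$ to an invertible operator, the analogous bound for negative iterates produces the uniform pointwise estimate $\|\mathcal{T}^{k}g\|\geq\sqrt{A/B}\,\|g\|$ for all $k$. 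Substituting $f=\Upsilon_{n_0}^{-1}g$ into the upper frame inequality and using $\Upsilon_\xi=\mathcal{T}^{\xi-n_0}\Upsilon_{n_0}$ shows that $\sum_{\xi\in\mathfrak{S}}\|\mathcal{T}^{\xi-n_0}g\|^{2}$ must converge, which is impossible for $g\neq 0$ once every term is bounded below by a fixed positive multiple of $\|g\|^{2}$. This contradiction eliminates the infinite dimensional, linearly independent case, so $\hbox{span}\{\Upsilon_\xi\}_{\xi\in\mathfrak{S}}$ is finite dimensional and $\{\Upsilon_\xi\}_{\xi\in\mathfrak{S}}$ is linearly dependent. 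I expect the genuinely delicate point to be the passage from surjectivity of the single operator $\Upsilon_{n_0}$ to two-sided control of the iterates of $\mathcal{T}$; if that invertibility is unavailable (for instance when $\mathfrak{S}$ is only one-sided), one must instead extract finite dimensionality directly from the convergence of $\sum_{k}\|\mathcal{T}^{k}g\|^{2}$, the lower frame bound, and the stable-range condition $\Upsilon_\xi(\mathfrak{N}_\xi)=\mathfrak{N}_\xi$.
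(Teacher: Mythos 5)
Your argument has a genuine gap at exactly the step you flag as delicate, and it is not a technicality that can be patched within your framework. The uniform pointwise estimate $\|\mathcal{T}^{k}g\|\geq\sqrt{A/B}\,\|g\|$ requires not only $1\leq\|\mathcal{T}^{k}\|\leq\sqrt{B/A}$ for positive $k$ but the same bound for $\mathcal{T}^{-k}$, hence the invertibility of $\mathcal{T}$ with uniformly bounded inverse powers. Invertibility of $\mathcal{T}$ is not among the hypotheses of this proposition (contrast Theorem 2.2, where it is assumed explicitly), and it does not follow from the surjectivity of the single operator $\Upsilon_{n_0}$: from $\mathcal{T}\Upsilon_{n_0}=\Upsilon_{n_0+1}$ and the invertibility of $\Upsilon_{n_0}$ you only obtain $\mathcal{T}=\Upsilon_{n_0+1}\Upsilon_{n_0}^{-1}$ on $\mathcal{H}$, which is bounded but is invertible only if $\Upsilon_{n_0+1}$ is, and nothing forces that. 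A second warning sign is that your concluding contradiction (a convergent series whose terms are each bounded below by $(A/B)\|g\|^{2}>0$) nowhere uses the assumed linear independence of $\{\Upsilon_\xi\}_{\xi\in\mathfrak{S}}$; if your estimates held, you would have shown that no nonzero $g$ exists, i.e.\ that the hypotheses are vacuous. Proving strictly more than the statement from a hypothesis you did not invoke is a strong indication that the missing estimate is carrying the entire argument.

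The paper's proof takes a completely different, purely algebraic route that avoids any norm control of $\mathcal{T}$. It applies $\tilde{\mathcal{T}}$ to the reconstruction formula $f=\sum_{\xi}\Upsilon_{\xi}^{*}\Gamma_{\xi}f$ of a dual $g$-fusion frame and uses $\tilde{\mathcal{T}}\Upsilon_{\xi}^{*}=\Upsilon_{\xi+1}^{*}$ to get the identity $\Upsilon_{j+1}=\sum_{\xi}\Upsilon_{\xi+1}^{*}\Gamma_{\xi}\Upsilon_{j}$; it then swaps two frame elements $\Upsilon_{\theta}\leftrightarrow\Upsilon_{\theta'}$ (with the correspondingly swapped canonical dual, chosen so that $n_0$ is away from the affected indices), equates the two resulting expansions of $\Upsilon_{j_0+1}$, and cancels the common terms. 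Surjectivity of $\Upsilon_{n_0}$ and of $S^{-1}$ enters only at the very end, to strip the right-hand factor $S^{-1}\Upsilon_{n_0}$ and leave a nontrivial finite linear relation among the $\Upsilon_{\xi}$. To salvage your approach you would need either to add invertibility of $\mathcal{T}$ (with control of its negative powers) as a hypothesis, or to extract finite-dimensionality of $\operatorname{span}\{\Upsilon_\xi\}$ from one-sided iteration alone; your sketch supplies neither.
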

	
	\begin{proof}
		Suppose that   $\{(\frak{N} _\xi, \Gamma_\xi  )\}_{\xi  \in  \mathfrak{S}}$  be the dual   $g-$fusion frame of 
		$\{ (\frak{N} _\xi , \Upsilon_\xi  )\}_{\xi  \in  \mathfrak{S}}$.
		
		Thus  every $f \in \mathcal{H}$,
		\begin{eqnarray} \nonumber
			f = \sum_{\xi  \in  \mathfrak{S}}  \Upsilon_{\xi }^{*}    \Gamma_\xi   (f).
		\end{eqnarray}
		So $\tilde{\mathcal{T}}(f)= \sum_{\xi  \in  \mathfrak{S}}  \Upsilon_{\xi +1}^{*}    \Gamma_\xi    (f)$. This implies that
		\begin{eqnarray*} \nonumber
			\mathcal{T}(\Upsilon_j (f) ) =  \sum_{\xi  \in  \mathfrak{S}}  \Upsilon_{\xi +1}^{*}    \Gamma_\xi     \Upsilon_j  (f).
		\end{eqnarray*}
		For every $f \in \mathcal{H}$ we have
		\begin{eqnarray} 
			\Upsilon_{j+1}  (f)= \sum_{\xi  \in  \mathfrak{S}}  \Upsilon_{\xi +1}^{*}    \Gamma_\xi     \Upsilon_j  (f).
		\end{eqnarray}
		Choose $\theta, \theta^{'}  \in  \mathfrak{S}$ such that  $\theta \not = \theta^{'}$ and $ n_{0}  \not \in  \{\theta -1, \theta,\theta^{'}-1, \theta^{'} \}$.
		Let $\{\tilde{\Upsilon} _\xi\}_{\xi  \in  \mathfrak{S}}$ denote the sequence consisting of the same elements as $\Upsilon_\xi $, but with $\Upsilon_\theta$ and $\Upsilon_{\theta^\prime}$ interchanged.
		Clearly, that
		\begin{eqnarray} \nonumber
			\widetilde{\Gamma} _\xi =
			\left\{
			\begin{array}{ll}
				\Gamma_\xi , \,\,\,\,\,\,\,\,\,\,\,\,\,\,    \xi \not = \theta, \theta^{'},  \\
				\Gamma_{\theta^{'}},  \,\,\,\,\,\,\,\,\,\,\,\,\,\,     \xi=\theta, \\
				\Gamma_{\theta},  \,\,\,\,\,\,\,\,\,\,\,\,\,\,     \xi=\theta^{'},
			\end{array}
			\right.
		\end{eqnarray}
		is the canonical dual $g-$fusion frame $\{(\frak{N} _\xi, \widetilde{\Upsilon} _\xi ) \}_{ \xi  \in  \mathfrak{S}}$.
		From (3.1) we obtain
	
		\begin{eqnarray} 
			\Upsilon_{j_{0}+1} &=&  \sum_{\xi  \in  \mathfrak{S}} \Upsilon_{\xi +1}  {\Upsilon_\xi   S^{-1}}   \Upsilon_{j_{0}}  \end{eqnarray}
		Hence
		\begin{eqnarray}\nonumber
			\Upsilon_{j_{0}+1}&=&  \Upsilon_{\theta}  \Upsilon_{\theta-1}  S^{-1}   \Upsilon_{j_{0}}
			+  \Upsilon_{\theta+1}   \Upsilon _{\theta}   S^{-1}    \Upsilon_{j_{0}} 
			+  \Upsilon_{\theta^{'}}   \Upsilon_{\theta^{'}-1}   S^{-1}    \Upsilon_{j_{0}}
			+\Upsilon_{\theta^{'}+1}  \Upsilon_{\theta^{'}}   S^{-1}   \Upsilon_{j_{0}} \\ \nonumber &+& \sum_{\xi  \not \in  \{\theta -1, \theta, \theta^{'}-1, \theta{'}\} }\Upsilon_{\xi +1}   {\Upsilon_\xi   S^{-1}}  \nonumber \Upsilon_{j_{0}}     
		\end{eqnarray} 
		Then we have
		
		\begin{eqnarray} 
			\Upsilon_{j_{0}+1} &=&  \widetilde{\Upsilon}_{j_{0}+1} \nonumber \\
			&=&  \sum_{\xi  \in  \mathfrak{S}}  \widetilde{\Upsilon}_{\xi +1}  \widetilde{\Gamma} _\xi    \widetilde{\Upsilon }_{j_{0}} . \nonumber 
		\end{eqnarray}
		So,
		\begin{eqnarray}\nonumber 
			\Upsilon_{j_{0}+1} &=&  \Upsilon_{\theta ^{'}}   \Upsilon_{\theta-1}  S^{-1}   \Upsilon_{j_{0}}
			+  \Upsilon_{\theta+1} \Upsilon_{\theta^{'}}  S^{-1}   \Upsilon_{j_{0}} 
			+ \Upsilon_{\theta}   \Upsilon_{\theta^{'}-1}  S^{-1}  \Upsilon_{j_{0}}
			+ \Upsilon_{\theta^{'}+1}   \Upsilon_{\theta}  S^{-1}  \Upsilon_{j_{0}} \\ \nonumber &+& \sum_{\xi  \not \in  \{\theta -1, \theta, \theta^{'}-1, \theta{'}\} }\Upsilon_{\xi +1}  \Upsilon_\xi    S^{-1}  \Upsilon_{j_{0}}. \nonumber
		\end{eqnarray}                                                              
		Applying   (4.2), we achieve
		\begin{eqnarray} \nonumber
			\Upsilon_{\theta}  \Upsilon_{\theta-1}  S^{-1}   \Upsilon_{j_{0}}
			&+&  \Upsilon_{\theta+1}   \Upsilon _{\theta}   S^{-1}    \Upsilon_{j_{0}}
			+  \Upsilon_{\theta^{'}}   \Upsilon_{\theta^{'}-1}   S^{-1}    \Upsilon_{j_{0}} 
			+ \Upsilon_{\theta^{'}+1}   \Upsilon_{\theta^{'}}   S^{-1}   \Upsilon_{j_{0}} \nonumber \\
			&=&   \Upsilon_{\theta ^{'}}   \Upsilon_{\theta-1}  S^{-1}   \Upsilon_{j_{0}}
			+  \Upsilon_{\theta+1}   \Upsilon_{\theta^{'}}  S^{-1}   \Upsilon_{j_{0}}
			+ \Upsilon_{\theta}   \Upsilon_{\theta^{'}-1}  S^{-1}   \Upsilon_{j_{0}} 
			+ \Upsilon_{\theta^{'}+1}   \Upsilon_{\theta}   S^{-1}  \Upsilon_{j_{0}}. \nonumber
		\end{eqnarray}
		
		Now, we assume that $\theta >\theta^{'}$ and $\theta^{'}= \theta+q$,  for some $q \in \mathbb{N}$.
		So
		
		\begin{eqnarray} \nonumber
			0 &=&\Upsilon_{\theta}   \Upsilon_{\theta-1}   S^{-1}   \Upsilon_{j_{0}}
			+  \Upsilon_{\theta+1}  \Upsilon_{\theta} S^{-1}  \Upsilon_{j_{0}}
			+ \Upsilon_{\theta+q}  \Upsilon_{\theta+(q-1)}  S^{-1}  \Upsilon_{j_{0}} + \Upsilon_{\theta+(q+1)}  \Upsilon_{\theta+q}  S^{-1}   \Upsilon_{j_{0}}
			\nonumber \\
			&-& \Upsilon_{\theta+q} \Upsilon_{\theta-1}   S^{-1}   \Upsilon_{j_{0}} -  \Upsilon_{\theta+1}  \Upsilon_{\theta+q}   S^{-1}   \Upsilon_{j_{0}}
			- \Upsilon_{\theta}    \Upsilon_{\theta+(q-1)}  S^{-1} \Upsilon_{j_{0}} - \Upsilon_{\theta+(q+1)}   \Upsilon_{\theta}   S^{-1}  \Upsilon_{j_{0}}.   \nonumber
		\end{eqnarray}
		
		Hence
		\begin{eqnarray} \nonumber
			\Upsilon_{2  \theta-1}   S^{-1}   \Upsilon_{j_{0}}
			&+&  \Upsilon_{2   \theta+1}  S^{-1}  \Upsilon_{j_{0}}
			+ \Upsilon_{2 (\theta + q) -1}   S^{-1}  \Upsilon_{j_{0}}  \nonumber \\
			&+& \Upsilon_{2 ( \theta+q)+1}  S^{-1}   \Upsilon_{j_{0}}
			-2    \Upsilon_{2  \theta+(q-1)}  S^{-1}  \Upsilon_{j_{0}} \nonumber \\
			&-& 2   \Upsilon_{2  \theta+(q+1)}  S^{-1}   \Upsilon_{j_{0}} = 0.   \nonumber
		\end{eqnarray}
		Since  $\Upsilon_{j_0}$ and $S^{-1}$  are surjective, we have
		\begin{eqnarray} \nonumber
			\Upsilon_{2  \theta-1}  &+& \Upsilon_{2   \theta+1} + \Upsilon_{2  (\theta+q)-1} \nonumber \\
			&+&   \Upsilon_{2 ( \theta+q)+1}
			-  2  \Upsilon_{2 \theta+(q-1)}  \nonumber \\
			&- & 2  \Upsilon_{2  \theta+(q+1)} =0,   \nonumber
		\end{eqnarray}
		so $\Upsilon_\xi $ is linearly dependent. 
	\end{proof}

\begin{theorem}
	Consider the sequence $\{\widehat{\Upsilon} _\xi \}_{\xi  \in  \mathfrak{S}}$ within $\operatorname{End}_\mathcal{A}^*(\mathcal{H})$, where for each $\xi  \in  \mathfrak{S}$, $\widehat{\Upsilon} _\xi(\mathcal{H}) \subseteq \frak{N} _\xi$. Let $\{ (\frak{N} _\xi , \Upsilon_\xi  )\}_{\xi  \in  \mathfrak{S}}$ represent a $g-$fusion frame via a linear operator $\mathcal{T}$ acting on $\hbox{span}\{\frak{N} _\xi\}_{\xi \in {\mathfrak{S}}}$. Assume the existence of constants $\eta, \beta \in [0,1)$, fulfilling the inequality
	\begin{eqnarray}
	\bigg \| \sum_{\xi =1}^ n  \alpha_ \xi\big(\Upsilon_\xi  - \widehat{\Upsilon} _\xi  \big) (f) \bigg \| \leq \eta
	\bigg \| \sum_{\xi =1}^ n  \alpha_ \xi  \Upsilon_\xi   (f) \bigg \| + \beta
	\bigg \| \sum_{\xi =1}^ n  \alpha_ \xi \widehat{\Upsilon} _\xi (f) \bigg \|
	\end{eqnarray}
	for any $f \in \mathcal{H}$ and all finite complex sequences $\{\alpha_ \xi \}_{\xi =1}^n$. Under these conditions, the subsequent statements are valid:
	\begin{itemize}
		\item[$\emph{(i)}$]
		\begin{eqnarray} \nonumber
		\bigg(\frac{1 - \eta}{1 + \beta}\sqrt{A} \bigg)^2     \langle f,f\rangle \leq \sum_{ \xi  \in  \mathfrak{S}}  \langle\widehat{\Upsilon}_\xi (f),\Upsilon_\xi (f)  \rangle \leq  \bigg(\frac{1 + \eta}{1 - \beta}\sqrt{B} \bigg)^2     \langle f,f\rangle ,
		\end{eqnarray} where $A$ and $B$ are frame bounds of $\{ (\frak{N} _\xi , \Upsilon_\xi  )\}_{\xi  \in  \mathfrak{S}}$. 
		\item[$\emph{(ii)}$]
		If the span of $\Upsilon_\xi$ is infinite-dimensional and $\{\widehat{\Upsilon} _\xi \}_{\xi  \in  \mathfrak{S}}$ forms an infinite set, then $\{\widehat{\Upsilon} _\xi \}_{\xi  \in  \mathfrak{S}}$ exhibits linear independence.
	\end{itemize}
\end{theorem}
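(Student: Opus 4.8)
The engine of the proof is a Christensen-type comparison of the two synthesis-type operators attached to $\{\Upsilon_\xi\}_{\xi\in\mathfrak{S}}$ and $\{\widehat{\Upsilon}_\xi\}_{\xi\in\mathfrak{S}}$. First I would set $P=\sum_{\xi=1}^n\alpha_\xi\Upsilon_\xi(f)$ and $Q=\sum_{\xi=1}^n\alpha_\xi\widehat{\Upsilon}_\xi(f)$, so that hypothesis (3.3) reads $\|P-Q\|\le\eta\|P\|+\beta\|Q\|$. Two applications of the triangle inequality then give
\[ \tfrac{1-\eta}{1+\beta}\,\bigl\|P\bigr\|\le\bigl\|Q\bigr\|\le\tfrac{1+\eta}{1-\beta}\,\bigl\|P\bigr\|, \]
valid for every $f\in\mathcal{H}$ and every finite scalar family $\{\alpha_\xi\}$. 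Since $\{(\mathfrak{N}_\xi,\Upsilon_\xi)\}_{\xi\in\mathfrak{S}}$ is a $g$-fusion frame with bounds $A,B$, its synthesis operator is bounded and surjective; the displayed comparison transfers both properties to the synthesis operator of $\{\widehat{\Upsilon}_\xi\}_{\xi\in\mathfrak{S}}$, so that $\{(\mathfrak{N}_\xi,\widehat{\Upsilon}_\xi)\}_{\xi\in\mathfrak{S}}$ is again a $g$-fusion frame, with bounds $\bigl(\tfrac{1-\eta}{1+\beta}\bigr)^2A$ and $\bigl(\tfrac{1+\eta}{1-\beta}\bigr)^2B$. I will use this intermediate frame property only as a tool; the actual target of $(i)$ is the \emph{mixed} quantity $\sum_{\xi}\langle\widehat{\Upsilon}_\xi f,\Upsilon_\xi f\rangle$.

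For the upper bound in $(i)$ I would read the mixed sum as the $\ell^2\bigl(\{\mathfrak{N}_\xi\}\bigr)$-inner product $\langle\{\widehat{\Upsilon}_\xi f\},\{\Upsilon_\xi f\}\rangle$ and apply the Cauchy–Schwarz inequality for the $\mathcal{A}$-valued inner product; bounding each factor by the corresponding upper frame bound of $\{\widehat{\Upsilon}_\xi\}$ and of $\{\Upsilon_\xi\}$ yields $\bigl\|\sum_{\xi}\langle\widehat{\Upsilon}_\xi f,\Upsilon_\xi f\rangle\bigr\|\le\tfrac{1+\eta}{1-\beta}B\,\|f\|^2$, which is dominated by the stated bound $\bigl(\tfrac{1+\eta}{1-\beta}\sqrt{B}\bigr)^2\langle f,f\rangle$. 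For the lower bound I would write $\widehat{\Upsilon}_\xi=\Upsilon_\xi-E_\xi$ with $E_\xi=\Upsilon_\xi-\widehat{\Upsilon}_\xi$, giving
\[ \sum_{\xi}\langle\widehat{\Upsilon}_\xi f,\Upsilon_\xi f\rangle=\langle Sf,f\rangle-\sum_{\xi}\langle E_\xi f,\Upsilon_\xi f\rangle, \]
and then combine the frame lower bound $\langle Sf,f\rangle\ge A\langle f,f\rangle$ with a control of the cross error term so as to keep the right-hand side above $\bigl(\tfrac{1-\eta}{1+\beta}\bigr)^2A\langle f,f\rangle$.

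For part $(ii)$ I would argue by contradiction. Suppose $\{\widehat{\Upsilon}_\xi\}_{\xi\in\mathfrak{S}}$ is linearly dependent, say $\sum_{\xi=p}^{q}\delta_\xi\widehat{\Upsilon}_\xi=0$ with some $\delta_{\xi_0}\neq0$. Feeding $\alpha_\xi=\delta_\xi$ into (3.3) makes the last term on the right vanish, while the left-hand side collapses to $\bigl\|\sum_{\xi}\delta_\xi\Upsilon_\xi f\bigr\|$; hence $\bigl\|\sum_{\xi}\delta_\xi\Upsilon_\xi f\bigr\|\le\eta\bigl\|\sum_{\xi}\delta_\xi\Upsilon_\xi f\bigr\|$ for every $f$, and since $\eta<1$ this forces $\sum_{\xi=p}^{q}\delta_\xi\Upsilon_\xi=0$, i.e. $\{\Upsilon_\xi\}_{\xi\in\mathfrak{S}}$ is linearly dependent. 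But $\{(\mathfrak{N}_\xi,\Upsilon_\xi)\}_{\xi\in\mathfrak{S}}$ is represented via $\mathcal{T}$ and $\hbox{span}\{\Upsilon_\xi\}_{\xi\in\mathfrak{S}}$ is infinite-dimensional, so Lemma 3.1 forces $\{\Upsilon_\xi\}_{\xi\in\mathfrak{S}}$ to be linearly independent, a contradiction. The infiniteness of $\{\widehat{\Upsilon}_\xi\}_{\xi\in\mathfrak{S}}$ only serves to make the conclusion non-vacuous; the contradiction itself rests on the infinite-dimensional span of $\{\Upsilon_\xi\}_{\xi\in\mathfrak{S}}$.

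The main obstacle is the lower bound in $(i)$. Cauchy–Schwarz only produces an upper estimate for the modulus of the cross term, so it cannot by itself deliver a lower bound for $\sum_{\xi}\langle\widehat{\Upsilon}_\xi f,\Upsilon_\xi f\rangle$; moreover this mixed sum equals $\langle Rf,f\rangle$ with $R=\sum_{\xi}\Upsilon_\xi^*\widehat{\Upsilon}_\xi$, which is not a priori self-adjoint, so even interpreting the stated two-sided inequality in the operator order requires showing that $R$ is a positive, self-adjoint perturbation of $S$ lying above $\bigl(\tfrac{1-\eta}{1+\beta}\bigr)^2A\,\mathrm{Id}$. The delicate point is that (3.3) controls the synthesis-type combination norms $\bigl\|\sum\alpha_\xi E_\xi f\bigr\|$, whereas bounding $\bigl\|\sum_{\xi}\Upsilon_\xi^*E_\xi\bigr\|$ — and hence the cross error term above — is an analysis-side estimate; reconciling these, together with securing the self-adjointness and positivity of $R$, is where the genuine work lies and is likely to require the additional structure on the $\Upsilon_\xi$ (self-adjointness and $\Upsilon_\xi(\mathfrak{N}_\xi)=\mathfrak{N}_\xi$) carried throughout the paper.
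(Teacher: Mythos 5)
Your part (ii) is correct and is essentially the paper's own argument: if $\sum_{\xi}\delta_\xi\widehat{\Upsilon}_\xi=0$, then (3.3) collapses to $(1-\eta)\big\|\sum_{\xi}\delta_\xi\Upsilon_\xi (f)\big\|\leq 0$ for all $f$, so the dependence transfers to $\{\Upsilon_\xi\}_{\xi\in\mathfrak{S}}$, contradicting Lemma 3.1 (which the paper cites, with a typo, as ``Proposition 3.1''). Whether one phrases this as a contradiction or contrapositively is immaterial.

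Part (i) is where your proposal has a genuine gap, and you concede it yourself: you never actually produce the lower bound, only a plan (``combine \dots with a control of the cross error term'') whose key estimate is missing. Moreover, in the Hilbert $C^{*}$-module setting your outline breaks down earlier than you acknowledge. The comparison $\frac{1-\eta}{1+\beta}\|P\|\leq\|Q\|\leq\frac{1+\eta}{1-\beta}\|P\|$ is an inequality of \emph{norms}, and a norm inequality between positive elements of $\mathcal{A}$ does not imply an order inequality; so your intermediate claim that $\{\widehat{\Upsilon}_\xi\}_{\xi\in\mathfrak{S}}$ is a $g$-fusion frame with bounds $\big(\frac{1-\eta}{1+\beta}\big)^2A$ and $\big(\frac{1+\eta}{1-\beta}\big)^2B$ --- which requires $\sum_{\xi}\langle\widehat{\Upsilon}_\xi f,\widehat{\Upsilon}_\xi f\rangle\leq(\cdots)\langle f,f\rangle$ in the $C^{*}$-order --- does not follow from the norm comparison alone. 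Likewise, Cauchy--Schwarz in $\ell^2\big(\{\mathfrak{N}_\xi\}\big)$ bounds only $\big\|\sum_{\xi}\langle\widehat{\Upsilon}_\xi f,\Upsilon_\xi f\rangle\big\|$, whereas the statement of (i) is an inequality between elements of $\mathcal{A}$.

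For comparison: the paper's proof of (i) is far more naive than your plan. It tests (3.3) only on Kronecker sequences $\alpha_\xi=\varepsilon_{l\xi}$, obtains the scalar estimates $(1-\beta)\|\widehat{\Upsilon}_l(f)\|\leq(1+\eta)\|\Upsilon_l(f)\|$ and its mirror, and then simply \emph{asserts} the $\mathcal{A}$-valued inequalities $\big(\frac{1-\eta}{1+\beta}\big)^2\langle\Upsilon_l f,\Upsilon_l f\rangle\leq\langle\widehat{\Upsilon}_l(f),\Upsilon_l(f)\rangle\leq\big(\frac{1+\eta}{1-\beta}\big)^2\langle\Upsilon_l f,\Upsilon_l f\rangle$ before summing over $l$ and invoking the frame bounds of $\{\Upsilon_\xi\}_{\xi\in\mathfrak{S}}$. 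That unjustified passage from norm estimates in $\mathcal{H}$ to order estimates in $\mathcal{A}$, for a mixed inner product that (as you correctly note) need not even be self-adjoint, is exactly the obstacle you identified; the paper does not overcome it, it ignores it. So your diagnosis of where the real difficulty lies is accurate and is, in fact, a criticism that applies to the published proof --- but a diagnosis is not a proof, and as submitted your part (i) is incomplete.
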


\begin{proof}
	Select $l  \in  \mathfrak{S}$ and define $\alpha_ \xi  = \varepsilon_{l\xi}$ in (3.3), where $$\varepsilon _{l\xi}={\begin{cases}0&\text{if } l\neq \xi,\\1&\text{if } l=\xi.\end{cases}}$$ Consequently, for any $f \in \mathcal{H}$
	\begin{eqnarray} \nonumber
	\| \Upsilon_l (f) - \widehat{\Upsilon}_l (f) \| \leq \eta  \| \Upsilon_l (f)  \|  + \beta  \| \widehat{\Upsilon}_l (f) \|.
	\end{eqnarray}
	Then
	\begin{eqnarray} \nonumber
	(1 - \beta)   \| \widehat{\Upsilon}_l (f) \| \leq (1 + \eta)  \| \Upsilon_l (f)  \|.
	\end{eqnarray}
	Thus
	\begin{eqnarray} \nonumber
	\langle\widehat{\Upsilon}_l (f),\Upsilon_l (f)  \rangle \leq  \bigg(\frac{1 + \eta}{1 - \beta} \bigg)^2   \langle\Upsilon_l (f),\Upsilon_l (f)  \rangle.
	\end{eqnarray}
	This implies
	\begin{eqnarray} \nonumber
	\sum_{ i  \in  \mathfrak{S}}  \langle\widehat{\Upsilon}_l (f),\Upsilon_l (f)  \rangle \leq  \bigg(\frac{1 + \eta}{1 - \beta} \bigg)^2   \sum_{ i  \in  \mathfrak{S}}   \langle\Upsilon_l (f),\Upsilon_l (f)  \rangle \leq  \bigg(\frac{1 + \eta}{1 - \beta} \bigg)^2   B  \langle f,f\rangle,
	\end{eqnarray}
	where $B$ is the upper frame bound of $\{ (\frak{N} _\xi , \Upsilon_\xi  )\}_{\xi  \in  \mathfrak{S}}$.
	
	Similarly, for all $i  \in  \mathfrak{S}$ and $f \in \mathcal{H}$, $$ \langle\widehat{\Upsilon}_l (f),\Upsilon_l (f)  \rangle\geq  \bigg(\frac{1 - \eta}{1 + \beta} \bigg)^2  \langle \Upsilon_l f,\Upsilon_l f\rangle.$$
	Hence
	$$ \sum_{ i  \in  \mathfrak{S}}  \langle\widehat{\Upsilon}_l (f),\Upsilon_l (f)  \rangle\geq  \bigg(\frac{1 - \eta}{1 + \beta} \bigg)^2  \sum_{ i  \in  \mathfrak{S}} \langle \Upsilon_l f,\Upsilon_l f\rangle.$$
	In conclusion,
	\begin{eqnarray} \nonumber
	\sum_{ i  \in  \mathfrak{S}}  \langle\widehat{\Upsilon}_l (f),\Upsilon_l (f)  \rangle \geq  \bigg(\frac{1 - \eta}{1 + \beta} \bigg)^2  A   \langle f,f\rangle.
	\end{eqnarray}
	
	That is, \begin{eqnarray} \nonumber
	\bigg(\frac{1 - \eta}{1 + \beta}\sqrt{A} \bigg)^2     \langle f,f\rangle \leq \sum_{ \xi  \in  \mathfrak{S}}  \langle\widehat{\Upsilon}_\xi (f),\Upsilon_\xi (f)  \rangle \leq  \bigg(\frac{1 + \eta}{1 - \beta}\sqrt{B} \bigg)^2     \langle f,f\rangle,
	\end{eqnarray}
	
	Let $\{\alpha_ \xi  \}_{\xi =1}^n$ be a finite sequence in $\mathbb{C}$ with  $\sum_{\xi =1}^n \alpha_ \xi   \widehat{\Upsilon} _\xi =0$. Given $\eta \in [0,1)$,  by  (3.3)
	\begin{eqnarray} \nonumber
	\sum_{\xi =1}^n \alpha_ \xi  \Upsilon_\xi  =0.
	\end{eqnarray}
	As per Proposition 3.1, $\alpha_ \xi  =0$ for all $k=1, \ldots, n$.
	Thus, $\{\widehat{\Upsilon} _\xi \}_{\xi  \in  \mathfrak{S}}$ is linearly independent.
\end{proof}

	\medskip

	\section*{Declarations}
	
	\medskip
	
	\noindent \textbf{Availability of Data and Materials}\newline
	\noindent Not applicable.
	
	\medskip
	
	\noindent \textbf{Ethics Approval and Consent to Participate}\newline
	\noindent It is important to note that this article does not involve any studies with animals or human participants.
	
	\medskip
	
	\noindent \textbf{Competing Interests}\newline
	\noindent The authors have no conflicts of interest to declare.
	
	\medskip
	
	\noindent \textbf{Funding}\newline
	\noindent There are no financial sources to declare for this paper.
	
	\medskip
	
	\noindent \textbf{Authors' Contributions}\newline
	\noindent All authors have equally contributed to the conception and design of the study, drafting of the manuscript, sequence alignment, and have read and approved the final manuscript.
	
	\medskip

\end{document}